\newcommand\RR{{\mathbb R}}
\DeclareMathOperator{\TS}{TS}
\DeclareMathOperator{\solve}{solve}
\DeclareMathOperator{\sg}{sg}
\DeclareMathOperator{\man}{man}
\newtheorem{thm}{Theorem}
\newtheorem{lem}[thm]{Lemma}
\newtheorem{prop}[thm]{Proposition}
\newtheorem{cor}[thm]{Corollary}
\newdefinition{defn}[thm]{Definition}
\newdefinition{exmp}[thm]{Example}
\newdefinition{conj}[thm]{Conjecture}
\newdefinition{claim}{Claim}
\newdefinition{construction}[thm]{Construction}
\newdefinition{rem}[thm]{Remark}
\newdefinition{qst}[thm]{Question}
\journal{Discrete Optimization}
\begin{document}

\begin{frontmatter}

%% Title, authors and addresses

%% use the tnoteref command within \title for footnotes;
%% use the tnotetext command for the associated footnote;
%% use the fnref command within \author or \address for footnotes;
%% use the fntext command for the associated footnote;
%% use the corref command within \author for corresponding author footnotes;
%% use the cortext command for the associated footnote;
%% use the ead command for the email address,
%% and the form \ead[url] for the home page:
%%
%% \title{Title\tnoteref{label1}}
%% \tnotetext[label1]{}
%% \author{Name\corref{cor1}\fnref{label2}}
%% \ead{email address}
%% \ead[url]{home page}
%% \fntext[label2]{}
%% \cortext[cor1]{}
%% \address{Address\fnref{label3}}
%% \fntext[label3]{}

\title{Searching for Realizations of Finite Metric Spaces in Tight Spans}

\author[shvm]{Sven Herrmann}
\ead{sherrmann@mathematik.tu-darmstadt.de}

\author[shvm]{Vincent Moulton\corref{cor}}
\ead{vincent.moulton@cmp.uea.ac.uk}

\author[as]{Andreas Spillner}
\ead{andreas.spillner@uni-greifswald.de}

\cortext[cor]{Corresponding author}

\address[shvm]{School of Computing Sciences, University of East Anglia, Norwich, NR4 7TJ, United Kingdom}
\address[as]{Department of Mathematics and Computer Science, University of Greifswald, 17487 Greifswald, Germany}

%% use optional labels to link authors explicitly to addresses:
%% \author[label1,label2]{<author name>}
%% \address[label1]{<address>}
%% \address[label2]{<address>}

\begin{abstract}
An important problem that commonly arises in areas 
such as internet traffic-flow analysis, phylogenetics 
and electrical circuit design, is to find
a representation of any given metric $D$ on a finite set 
by an edge-weighted graph, such that the total edge 
length of the graph is minimum over all such 
graphs. Such a graph is called an {\em optimal realization}
and finding such realizations 
is known to be NP-hard.
Recently Varone presented a heuristic greedy algorithm 
for computing optimal realizations.
Here we present an alternative heuristic
that exploits the relationship between realizations
of the metric $D$ and its so-called tight span $T_D$.
The tight span $T_D$
is a canonical polytopal complex 
that can be associated to $D$, and our approach 
explores parts of \(T_D\) for realizations 
in a way that is similar to the classical simplex algorithm.
We also provide computational results illustrating 
the performance of our approach for different 
types of metrics, including \(l_1\)-distances and two-decomposable 
metrics for which it is provably
possible to find optimal realizations in 
their tight spans.
\end{abstract}

\begin{keyword}
%% keywords here, in the form: keyword \sep keyword
combinatorial optimization \sep metric \sep graph \sep realization \sep tight span
%% MSC codes here, in the form: \MSC code \sep code
%% or \MSC[2008] code \sep code (2000 is the default)

\end{keyword}

\end{frontmatter}

%\linenumbers

%% main text
\section{Introduction}

An important problem that 
commonly arises in areas such as 
internet traffic-flow analysis \cite{chu-gar-gra-01a}, 
phylogenetics \cite{ban-dre-92b}
and electrical circuit design \cite{hak-yau-64a}, 
is to realize any given metric $D$ on some finite set $X$ 
by an edge-weighted graph with $X$ 
labeling its vertex set, often with the additional
requirement that the total edge length of the graph is minimum.
This can be useful, for example, for visualizing the metric, or
for trying to better understand its structural properties. 
More formally this optimization problem 
can be stated as follows. 
A \emph{realization} \((G,\omega,\tau)\)
of $D$ is a connected graph \(G=(V,E)\)
with vertex set $V$ and edge set $E$, together with 
an edge-weighting 
\(\omega: E \rightarrow \mathbb{R}_{>0}\) 
and a labeling map \(\tau: X \rightarrow V\) such that,
for all \(x,y \in X\), \(D(x,y)\) equals \(D_G(\tau(x),\tau(y))\),
that is, the length of a shortest path from \(\tau(x)\)
to \(\tau(y)\) in \(G\) (cf. Figure~\ref{figure:examples:or}(a) and (b)).
The problem then is to find an 
\emph{optimal} realization of $D$, that is,
a realization of $D$ that has minimum total edge length
over all possible realizations of $D$.

\begin{figure}[h]
\centering
\includegraphics[scale=1.0]{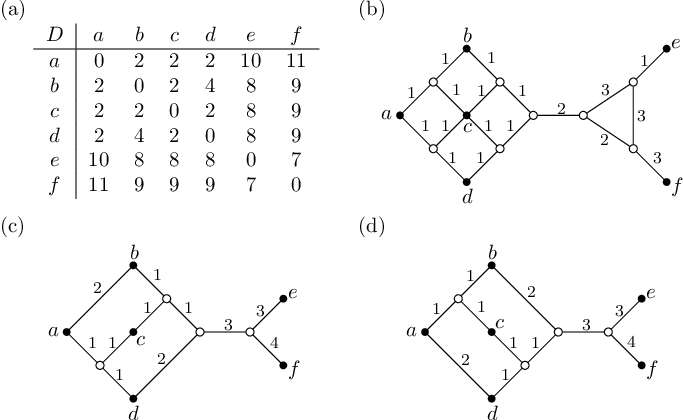}
\caption{(a) A metric \(D\) on \(X=\{a,b,c,d,e,f\}\).
         (b) A realization of \((X,D)\) that is not optimal.
             Vertices associated with an element of \(X\) are
             drawn as black dots, the remaining vertices are
             drawn as empty circles.
         (c),(d) Two optimal realizations of \((X,D)\).}
\label{figure:examples:or}
\end{figure}

Early work on optimal realizations started with 
\cite{hak-yau-64a} (see also \cite{var-06a} for a 
comprehensive list of references), which focused mainly
on special classes of metrics such as, for example,
those that admit an optimal realization 
where the underlying graph is a tree (so-called 
\emph{treelike} metrics).
Subsequently it was found that 
{\em every} metric \(D\) on a finite set \(X\) 
has an optimal realization
\cite{imr-sim-84a}, although 
this need not be unique 
(cf. Figure~\ref{figure:examples:or}(c) and (d)).
There even always exists an optimal realization
of \((X,D)\) with \(O(|X|^4)\) vertices \cite[p. 392]{dre-84},
which implies that there is an exhaustive
algorithm to search for an optimal realization.
However, it was also shown that computing an optimal realization is NP-hard 
\cite{alt-88a,win-84a}. More recently,
there has been renewed interest in computational aspects
of this problem. For example, 
in \cite{her-var-07a,her-var-08a} (see also 
\cite{dre-hub-al-10}) a way to 
break up the problem of computing an optimal
realization into subproblems using so-called 
\emph{cut points} is presented, 
and in \cite{var-06a} a heuristic is presented for
computing optimal realizations.

Here we present an alternative heuristic
for systematically computing optimal realizations that 
exploits the relationship between optimal
realizations of a metric $D$  
and its so-called \emph{tight span} \(T_D\) \cite{dre-84,isb-64}.
In brief (see Section~\ref{section:preliminaries} for details),
$T_D$ is a polytopal complex (essentially a union 
of polytopes) that can be canonically associated to $D$ 
which is itself a (non-finite) metric space and into 
which the metric $D$ can be canonically embedded. 
Remarkably, in \cite{dre-84} it is shown that the 
1-skeleton \(G_D\) of $T_D$ (i.e., 
the edge-weighted graph formed essentially by taking 
all of the 0- and 1-dimensional faces of \(T_D\))
is always a realization of~$D$.
Moreover, Dress conjectured 
\cite[(3.20)]{dre-84} that some optimal realization of
$D$ can always be obtained by removing
some set of edges from~\(G_D\).  

While Dress' conjecture is still open for metrics in general,
recently it has been shown to hold for the class
of so-called \emph{two-decomposable} metrics
\cite[Theorem~1.2]{her-koo-11a}, a class which includes
treelike metrics and \(l_1\)-distances between 
points in the plane (see Section~\ref{section:two:decomposable} 
for more details). 
In particular, this and 
the aforementioned result in \cite{dre-84} suggest
that it could be useful to consider \(G_D\) 
as a ``search space'' in which to 
look for some optimal realization of $D$ 
(or at least some 
interesting realization of $D$ which has 
relatively small total edge length).

Guided by this principle, 
given an arbitrary finite metric $D$,
in Section~\ref{section:algorithm:tight:span}
we propose a heuristic 
for computing a realization of $D$ 
that is a subgraph of~\(G_D\). This heuristic
explores parts of \(T_D\) in a way similar to
the classical simplex algorithm \cite{dan-63}.
Moreover, it does not explicitly compute \(G_D\), whose
vertex set can have cardinality that is 
exponential in \(|X|\) (see e.g.
\cite{her-jos-07a} for some explicit bounds).
We also show that the heuristic is guaranteed to
find optimal realizations for some simple types
of metrics.

Since, as mentioned above, the problem of finding optimal
realizations is NP-hard, we assess the performance of
our new heuristic using two strategies. First, 
we consider a special instance of the problem 
where we take metrics to be \(l_1\)-distances between points in the 
plane. In Section~\ref{section:mmn:or:connection} we show 
that finding optimal realizations of such a metric $D$
in $G_D$ is equivalent to the 
so-called \emph{minimum Manhattan network} problem
(which was also recently shown to be NP-hard \cite{chi-guo-09a}).
This allows us to compare the realizations computed by 
our heuristic with realizations computed using a
mixed integer linear program (MIP) for the minimum Manhattan network
problem presented in \cite{BenkertWWS06} (see also \cite{kna-spi-11a}
for a comprehensive list of references on other approaches for solving
this well-studied problem).
Second, in Section~\ref{section:minimal:subrealizations} 
we describe a mixed integer program (MIP) for computing a 
\emph{minimal subrealization} of a realization of some metric,
that is, a subrealization with minimum 
total edge length. This allows us to 
obtain some impression of how close 
the realizations computed by our heuristic are
to a minimal subrealization of \(G_D\) 
in case \(|X|\) is not too large. Moreover, in 
case the metric is two-decomposable, a minimal subrealization
of \(G_D\) is (by the aforementioned result 
in \cite{her-koo-11a}) an optimal realization
and so we can compare the realizations 
computed by our new heuristic with optimal ones
for this special class of metrics.

Based on these considerations, in 
Section~\ref{section:computational:experiments}
we present simulations for \(l_1\)-distances, 
two-decomposable metrics and random metrics
to assess the performance of our heuristic.
An implementation of this heuristic is freely available 
for download at
\url{www.uea.ac.uk/cmp/research/cmpbio/CoMRiT/}.
This includes the algorithm for efficiently computing 
cut points as described in \cite{dre-hub-al-10} and auxiliary
programs that allow to generate the MIP description for the 
minimum Manhattan network problem,
as well as for the problem of computing a minimal subrealization 
so that they can be solved using existing MIP solvers
(we used the solver that is part of the GNU linear programming kit
(\url{www.gnu.org/software/glpk/}) in our experiments). 
We conclude the paper with a brief discussion
of some possible future directions in Section~\ref{section:discussion}.

%%%%%%%%%%%%%%%%%%%%%%%%%%%%%%%%%%%%%%%%%%%%%%%%%%%%%%%%%
\section{Preliminaries}
\label{section:preliminaries}
%%%%%%%%%%%%%%%%%%%%%%%%%%%%%%%%%%%%%%%%%%%%%%%%%%%%%%%%%

In this section, we first recall the formal definition of the 
tight span of a metric, a concept that has been discovered
and re-discovered several times in the literature
(see e.g. \cite{chr-lar-94a,dre-84,isb-64}).
We also recall some facts concerning tight spans
and optimal realizations that will be used
later on (for more on this see e.g. \cite[Chapter 5]{dre-hub-12a}). 

%%%%%%%%%%%%%%%%%%%%%%%%%%%%%%%%%%%%%%%%%%%%%%%%%%%%%%%%%%
\subsection{Some tight span theory}
\label{section:little:bit:about:tight:spans}
%%%%%%%%%%%%%%%%%%%%%%%%%%%%%%%%%%%%%%%%%%%%%%%%%%%%%%%%%%

A \emph{finite metric space} is a pair $(X,D)$
consisting of a finite non-empty set \(X\) and
a symmetric bivariate map \(D: X \times X \rightarrow \mathbb{R}_{\geq 0}\)
such that \(D(x,x) = 0\) and \(D(x,z) \leq D(x,y) + D(y,z)\)
for all \(x,y,z \in X\). To emphasize that
\(D(x,y) = 0\) does not necessarily imply \(x=y\), such a map \(D\)
is often called a \emph{pseudometric}, but we will
simply refer to \(D\) as a  \emph{metric} here.
A map \(h:X \rightarrow X'\) from a metric space
\((X,D)\) into a metric space \((X',D')\) is an
\emph{isometric embedding} if \(D'(h(x),h(y)) = D(x,y)\)
for all \(x,y \in X\).

Now, given 
any finite metric space \((X,D)\), the \emph{tight span} \(T_D\)
is defined to be the polytopal complex (see e.g. \cite{kle-kle-99a}) 
that is the union of the bounded faces of the polyhedron
\[
P_D :=\{f\in\RR^X:f(x)+f(y)\geq D(x,y) \text{ for all } x,y\in X\}.
\]
Viewed as a subset of \(\mathbb{R}^X\), 
\(T_D\) can be endowed with the \(l_{\infty}\)-metric
which is defined by 
\[
D_{\infty}(f,g) = \max\{|f(x) - g(x)| : x \in X\}
\] 
for all \(f,g \in T_D\)
so that \((T_D,D_{\infty})\) is also a (non-finite!)
metric space.
Note that there exists a canonical isometric embedding 
of \((X,D)\) into \((T_D,D_{\infty})\), the 
so-called \emph{Kuratowski embedding} \cite{kur-35a},
that maps every \(x \in X\)
to \(k_x:X \rightarrow \mathbb{R} : y \mapsto D(x,y)\).
Note that the map \(k_x\) is a 0-dimensional face (or
vertex) of \(T_D\) for every \(x \in X\) and, therefore,
it is contained in the 1-skeleton~\(G_D\).

Later we will use the fact that the tight span 
can be viewed as a hull of the given metric space
similar to the convex hull associated to a set of
points in Euclidean space. To make this more
precise, define a map \(h:X \rightarrow X'\) from a metric space
\((X,D)\) into a metric space \((X',D')\) to be
\emph{non-expansive} if \(D'(h(x),h(y)) \leq D(x,y)\)
for all \(x,y \in X\), and a metric space \((X',D')\)
to be \emph{injective} if for every metric space \((X,D)\)
and every subset \(Y \subseteq X\) any non-expansive
map of the subspace \((Y,D|_Y)\) into \((X',D')\)
can be extended to a non-expansive map of \((X,D)\)
into \((X',D')\). 
The tight span satisfies the following universal property
\cite{dre-84,isb-64}:

\begin{lem}
\label{lemma:induced:isometric:embedding}
Any isometric embedding of a metric space \((X,D)\)
into an injective metric space \((X',D')\) can be
extended to an isometric embedding of \((T_D,D_{\infty})\)
into \((X',D')\).
\end{lem}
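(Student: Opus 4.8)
The plan is to exploit the fact that $(T_D,D_{\infty})$ is itself an injective metric space (as established in \citep{dre-84,isb-64}) together with the assumed injectivity of $(X',D')$, and to build non-expansive maps in both directions whose composite turns out to be the identity. Writing $\kappa:X\to T_D$ for the Kuratowski embedding, the isometric embedding $\phi:X\to X'$ and $\kappa$ are both isometries onto their images, so the map $k_x\mapsto\phi(x)$ is an isometry of the subspace $\kappa(X)\subseteq T_D$ into $X'$, and in particular is non-expansive. First I would invoke injectivity of $(X',D')$ to extend this map to a non-expansive map $\Phi:T_D\to X'$ with $\Phi\circ\kappa=\phi$. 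Symmetrically, the map $\phi(x)\mapsto k_x$ is an isometry of $\phi(X)\subseteq X'$ into $T_D$, and injectivity of $(T_D,D_{\infty})$ yields a non-expansive extension $\Psi:X'\to T_D$ with $\Psi\circ\phi=\kappa$. Since $\Phi$ is non-expansive it already satisfies $D'(\Phi(f),\Phi(g))\leq D_{\infty}(f,g)$; the whole problem is therefore to prove the reverse inequality, which is where $\Psi$ enters.

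The crucial step is to show that the non-expansive self-map $r:=\Psi\circ\Phi$ of $T_D$ is the identity. By construction $r(k_x)=\Psi(\phi(x))=k_x$ for every $x\in X$, so $r$ fixes $\kappa(X)$ pointwise. To leverage this I would first record the elementary but key formula
\[
D_{\infty}(f,k_x)=f(x)\qquad\text{for all }f\in T_D,\ x\in X.
\]
This follows from the characterization of the points of $T_D$ as the pointwise-minimal elements of $P_D$: minimality gives $f(x)=\max_{y\in X}\bigl(D(x,y)-f(y)\bigr)$, whence $f$ is $1$-Lipschitz for $D$ (so $f(y)-D(x,y)\leq f(x)$), while the defining inequalities of $P_D$ give $D(x,y)-f(y)\leq f(x)$; together these bound $|f(y)-D(x,y)|$ by $f(x)$, with equality attained at $y=x$. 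Applying the formula to both $f$ and $r(f)$, non-expansiveness of $r$ gives $r(f)(x)=D_{\infty}(r(f),k_x)\leq D_{\infty}(f,k_x)=f(x)$ for every $x$, i.e. $r(f)\leq f$ pointwise. But $r(f)\in T_D\subseteq P_D$ and $f$ is pointwise-minimal in $P_D$, so $r(f)=f$; thus $r=\mathrm{id}_{T_D}$.

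With $\Psi\circ\Phi=\mathrm{id}_{T_D}$ in hand the proof closes by a sandwich argument: for $f,g\in T_D$,
\[
D_{\infty}(f,g)=D_{\infty}\bigl(\Psi(\Phi(f)),\Psi(\Phi(g))\bigr)\leq D'\bigl(\Phi(f),\Phi(g)\bigr)\leq D_{\infty}(f,g),
\]
using non-expansiveness of $\Psi$ and then of $\Phi$. Hence both inequalities are equalities, so $\Phi$ is an isometric embedding of $(T_D,D_{\infty})$ into $(X',D')$ extending $\phi$, as required. I expect the main obstacle to be the injectivity of $(T_D,D_{\infty})$ itself, which is the deepest ingredient and is precisely what makes the extension $\Psi$ available; the remaining work---the minimality description of $T_D$ and the formula $D_{\infty}(f,k_x)=f(x)$---is routine once that is granted.
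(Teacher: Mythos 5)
Your proof is correct, but note first that the paper itself offers no proof of this lemma: it is quoted directly from \citet{dre-84} and \citet{isb-64}, so the comparison has to be with the classical argument in those sources. You share with that argument the first step (extend $\phi$ to a non-expansive map $\Phi:T_D\to X'$ using injectivity of $(X',D')$) and the key formula $D_{\infty}(f,k_x)=f(x)$, which you derive correctly from the minimality characterization of $T_D$ --- though, since the paper defines $T_D$ as the union of the bounded faces of $P_D$, you should explicitly cite the fact (itself a theorem of \citet{dre-84}) that this union coincides with the set of pointwise-minimal elements of $P_D$. Where you genuinely diverge is in proving that $\Phi$ is isometric: you invoke the injectivity of $(T_D,D_{\infty})$ --- the deepest theorem of the whole theory --- to manufacture a non-expansive retraction $\Psi:X'\to T_D$, prove $\Psi\circ\Phi=\mathrm{id}_{T_D}$ by your minimality argument, and then sandwich. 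The classical route avoids injectivity of $T_D$ entirely and shows directly that \emph{any} non-expansive extension $\Phi$ is isometric (the ``essentialness'' of the extension $X\hookrightarrow T_D$), using only the ingredients you already have: given $f,g\in T_D$, choose $x_0$ with $D_{\infty}(f,g)=g(x_0)-f(x_0)$ (swapping $f$ and $g$ if necessary) and, by minimality of $g$, choose $y_0$ with $g(x_0)=D(x_0,y_0)-g(y_0)$; then the triangle inequality in $X'$, together with $D'(\Phi(f),\phi(x_0))\leq D_{\infty}(f,k_{x_0})=f(x_0)$ and $D'(\Phi(g),\phi(y_0))\leq g(y_0)$, gives
\[
D'(\Phi(f),\Phi(g))\;\geq\; D(x_0,y_0)-f(x_0)-g(y_0)\;=\;g(x_0)-f(x_0)\;=\;D_{\infty}(f,g).
\]
So your argument is valid but uses a strictly heavier toolkit; what it buys, once the injective-hull theorem is granted as a black box, is a rather mechanical retraction computation in place of the choice of the pair $(x_0,y_0)$, plus the extra byproduct that $T_D$ is a non-expansive retract of any injective space into which $X$ embeds isometrically.
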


%%%%%%%%%%%%%%%%%%%%%%%%%%%%%%%%%%%%%%%%%%%%%%%%%%%%%%%%%%%
\subsection{Tight spans and optimal realizations}
\label{section:tight:span:or}
%%%%%%%%%%%%%%%%%%%%%%%%%%%%%%%%%%%%%%%%%%%%%%%%%%%%%%%%%%%

We now present a key relationship between
realizations and tight spans that was
first discovered by Dress. Let $(X,D)$ 
be an arbitrary finite metric space and
\(G_D=(V_D,E_D)\) the graph that forms the
1-skeleton of \(T_D\). Defining the map 
\(\omega_D:E_D \rightarrow \mathbb{R}_{\geq 0}\) 
by putting $\omega_D(\{u,v\})=D_\infty(u,v)$
for all edges \(\{u,v\}\) of \(G_D\) and
the map \(\tau_D:X \rightarrow V_D\) by putting \(\tau_D(x) = k_x\)
for all \(x \in X\),
it is shown in \cite[Theorem~5]{dre-84} that
\((G_D=(V_D,E_D),\omega_D,\tau_D)\) is 
a realization of \((X,D)\) 
(see also \cite[Theorem 5.15]{dre-hub-12a}).
Moreover, in \cite{dre-84} it is shown that, for 
any optimal realization $(G=(V,E),\omega,\tau))$ 
of $(X,D)$, there exists a map $h:V \to T_D$
with \(h(\tau(x)) = k_x\) for all \(x \in X\)
that preserves certain distances, that is, 
\(\omega(\{u,v\}) = D_{\infty}(h(u),h(v))\) for all 
edges~\(\{u,v\} \in E\).
While this suggests that every optimal realization
of $(X,D)$ is somehow 'contained' in $T_D$, in \cite{alt-88a} it was
shown that there exists an infinite
family of optimal realizations of a certain
metric $D^*$ on six points, for
which no member is isomorphic to some subrealization
of the 1-skeleton of $T_{D^*}$.
Still, as mentioned in the introduction,
it is not known whether 
or not there always exists some
optimal realization of \((X,D)\) that is
a subrealization of \((G_D=(V_D,E_D),\omega_D,\tau_D)\).

%%%%%%%%%%%%%%%%%%%%%%%%%%%%%%%%%%%%%%%%%%%%%%%%%%%%%%%%%%%%%%%%
\section{Two-decomposable metrics}
\label{section:two:decomposable}
%%%%%%%%%%%%%%%%%%%%%%%%%%%%%%%%%%%%%%%%%%%%%%%%%%%%%%%%%%%%%%%%

Before we present our heuristic in the next section,
we shall briefly consider a
special class of finite metrics \(D\),
the two-decomposable metrics,
for which it is \emph{known} that \(G_D\) always contains a
subrealization that is an optimal realization of \(D\).
As mentioned in the introduction, these metrics
are of interest as we can in principle compute optimal realizations
for them exactly and thus measure the accuracy of our
heuristic for computing realizations for small metric spaces.

We first need to recall some relevant concepts. A
\emph{split} \(S\) of a finite set $X$ 
is a bipartition \(\{A,B\}\) of \(X\)
into two non-empty subsets \(A\) and \(B\), also  denoted
by \(A|B\). For any \(x \in X\), that set in \(S\) that contains
\(x\) is denoted by \(S(x)\) and the other set by \(\overline{S}(x)\).
Two splits \(A|B\) and \(A'|B'\) of \(X\) are \emph{compatible}
if at least one of the intersections
\(A \cap A'\), \(A \cap B'\), \(B \cap A'\) and \(B \cap B'\) is
empty. Otherwise the two splits are \emph{incompatible}.
A set \(\Sigma\) of splits of \(X\) is called a \emph{split system} (on \(X\)).
A split system \(\Sigma\) is \emph{two}-\emph{compatible} if there is
no subset \(\Sigma' \subseteq \Sigma\) with \(|\Sigma'| = 3\) 
and any two distinct splits in \(\Sigma'\) are incompatible.

Now, for any split \(S\) of \(X\), define the metric \(D_S\) on \(X\)
putting, for all \(x,y \in X\), \(D_S(x,y) = 0\) if \(S(x) = S(y)\) and
\(D(x,y) = 1\) otherwise. A metric \(D\) on \(X\) is
\emph{two}-\emph{decomposable} if there exists a two-compatible
split system \(\Sigma\) on \(X\) and a weighting 
\(\lambda:\Sigma \rightarrow \mathbb{R}_{> 0}\)
with \(D = \sum_{S \in \Sigma} \lambda(S) \cdot D_S\).
We also say that \(D\) is \emph{induced} by \(\Sigma\)
and the weighting \(\lambda\). 
Later we will use the following result 
\cite[Theorem~1.2]{her-koo-11a}:

\begin{thm}
\label{theorem:vertex:embedding:optimal:realization:into:tight:span}
Let \(D\) be a two-decomposable metric on \(X\). 
Then there always exists an optimal realization
that is a subrealization of \((G_D,\omega_D,\tau_D)\).
In particular, there exists an optimal 
realization \((G=(V,E),\omega,\tau)\) of \((X,D)\)
such that there exists an injective map \(h:V \rightarrow T_D\)
with \(w(\{u,v\}) = D_{\infty}(h(u),h(v))\) for all edges
\(\{u,v\} \in E\) and \(h(\tau(x)) = k_x\) for all \(x \in X\).
\end{thm}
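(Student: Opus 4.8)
The plan is to exploit the especially simple polytopal structure that $T_D$ acquires when $D$ is two-decomposable, and then to deform an arbitrary optimal realization onto the $1$-skeleton $G_D$ cell by cell. First I would establish this cell structure. Since $\Sigma$ is two-compatible, no three of its splits are pairwise incompatible, and I would use the split-decomposition description of the tight span to show that every cell of $T_D$ is a (possibly degenerate) product of at most two line segments, so that the maximal cells are rectangles indexed by pairs of incompatible splits of $\Sigma$. The point I would want to extract is metric rather than merely combinatorial: restricted to such a rectangular $2$-cell, the intrinsic metric induced by $D_\infty$ is isometric to a rectangle equipped with the $l_1$-metric. This is exactly the Manhattan geometry that reappears in Section~\ref{section:mmn:or:connection}, and it is what makes the subsequent snapping argument work.

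Next, starting from any optimal realization $(G=(V,E),\omega,\tau)$ of $(X,D)$ — one exists since every finite metric admits an optimal realization — I would invoke the result of Dress recalled in Section~\ref{section:tight:span:or} to obtain a map $h\colon V\to T_D$ with $h(\tau(x))=k_x$ for all $x\in X$ and $\omega(\{u,v\})=D_\infty(h(u),h(v))$ for all $\{u,v\}\in E$. After subdividing edges if necessary, I may assume that the image of each edge lies in a single cell of $T_D$, so that each edge is carried to a geodesic segment inside some rectangular cell.

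The heart of the argument, and the step I expect to be hardest, is to push everything onto $G_D$ without increasing total length and without destroying the realization property. Working in one rectangular $2$-cell at a time, I would use the $l_1$-geometry established above: a shortest path in an $l_1$-rectangle is a monotone staircase, and any such interior routing can be replaced by a boundary routing of the same length running along the edges of the cell, which are edges of $G_D$. Carrying this out for all edges and vertices lying in cell interiors, while keeping the labelled vertices fixed at the $k_x$, yields a graph whose edges all lie in $G_D$. The delicate point is to guarantee that the modified network still realizes $D$ exactly: here I would appeal to the universal property of Lemma~\ref{lemma:induced:isometric:embedding}, arranging the deformation to be non-expansive and to fix the Kuratowski images $k_x$, so that all pairwise distances stay at least $D$, while the tight-span structure forces them to equal $D$. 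Since each local move does not increase total length, the resulting subrealization of $(G_D,\omega_D,\tau_D)$ is again optimal.

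Finally, to obtain the injective map $h$ of the ``in particular'' clause, I would identify any two vertices that are sent to a common point of $T_D$; such an identification preserves the realization and can only decrease the total edge length, so optimality is retained. An alternative route worth keeping in mind is induction on $|\Sigma|$, peeling off a single split $S$ to pass from $D$ to the two-decomposable metric $D-\lambda(S)\cdot D_S$ and tracking how $T_D$ expands in the $S$-direction; but I expect the direct geometric deformation above to be cleaner, since it isolates the one genuinely new ingredient, namely the $l_1$-structure of the $2$-cells.
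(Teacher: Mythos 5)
First, a remark on the comparison itself: the paper does not prove this theorem. It is quoted verbatim from \citet[Theorem~1.2]{her-koo-11a}, so there is no in-paper proof to measure your argument against; what follows assesses your proposal on its own merits, against what that cited work actually has to accomplish.

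Your argument has a genuine gap at its central step, the cell-by-cell snapping onto the boundary. It is not true that every geodesic routing through the interior of an \(l_1\)-rectangle can be replaced by a boundary routing of the same length: take a cell isometric to a \(w \times h\) Manhattan rectangle and a geodesic joining the midpoint of the bottom side to the midpoint of the top side; its \(l_1\)-length is \(h\), while every path along the boundary of that cell has length at least \(w+h\). Nothing in Dress's embedding result prevents the image of an optimal realization from containing exactly such crossings (its vertices may sit in the interiors of \(2\)-cells or of \(1\)-cells, not only at vertices of \(G_D\)), and for such a crossing no length-preserving reroute along that cell's boundary exists. The only length-preserving repair is to slide the crossing segment sideways until it lies on a side of the cell, but that moves its endpoints, which forces a coherent deformation of everything attached to them in neighbouring cells, cascading through the whole complex; organizing such a global sliding so that it terminates, does not increase total length, and preserves the realization property is precisely the hard content of the cited result, and your proposal replaces it with a local claim that is false. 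Note also that Alth\"ofer's example, recalled in Section~\ref{section:tight:span:or}, shows an optimal realization need not be isomorphic to \emph{any} subrealization of \((G_D,\omega_D,\tau_D)\); so any deformation argument must be prepared to change the combinatorial structure of the realization, which is another sign that purely local, structure-preserving moves cannot suffice.

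Two smaller points. Your appeal to Lemma~\ref{lemma:induced:isometric:embedding} and non-expansiveness to conclude that ``all pairwise distances stay at least \(D\)'' is backwards: a non-expansive deformation can only shorten paths, which preserves the \emph{upper} bounds on realized distances; the danger is distances dropping below \(D(x,y)\). The correct source of the lower bound is to keep every edge weight equal to (or at least) the \(D_\infty\)-distance between the images of its endpoints, whence the triangle inequality in \((T_D,D_\infty)\) gives that every path from \(k_x\) to \(k_y\) has length at least \(D_\infty(k_x,k_y)=D(x,y)\). This part is fixable. Finally, your step 1 (two-compatibility forces \(T_D\) to be at most \(2\)-dimensional with quadrilateral maximal cells carrying Manhattan geometry) is essentially correct, but it is itself a nontrivial structural result and is part of what \citet{her-koo-11a} establish, so it cannot simply be asserted.
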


We illustrate this theorem in Figure~\ref{figure:example:tight:span}.
More specifically, the metric $D$ in 
Figure~\ref{figure:examples:or}(a) is two-decomposable,
and its tight span is depicted in Figure~\ref{figure:example:tight:span}(a).
The realization $G_D$ is pictured 
in Figure~\ref{figure:example:tight:span}(b), and a
two-compatible split system 
associated to $D$ is given in Figure~\ref{figure:example:tight:span}(c).
Note that both of the optimal realizations for $D$
given in Figure~\ref{figure:examples:or}(c) and (d)
can be obtained from $G_D$ by removing precisely two edges.

\begin{figure}
\centering
\includegraphics[scale=1.0]{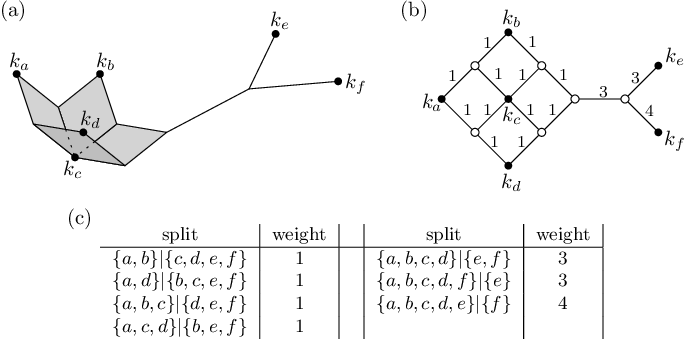}
\caption{(a) The tight span \(T_D\) of the metric \(D\) in 
         Figure~\ref{figure:examples:or}(a). It consists
of four maximal 2-dimensional faces surrounding the vertex $k_c$, 
and three maximal 1-dimensional faces all of
which have a vertex in common (and which form the ``fork''
in the figure). 
(b) The 1-skeleton $G_D$ of $T_D$.
         (c) A weighted two-compatible split system that
             induces~\(D\).}
\label{figure:example:tight:span}
\end{figure}

We now prove two simple but useful facts
concerning the relationship between 
\(l_1\)-distances between points in
the plane, two-decomposable metrics and treelike metrics.
For a point \(p \in \mathbb{R}^2\) we denote
by \(x(p)\) and \(y(p)\) the \(x\)- and \(y\)-coordinate
of \(p\), respectively, and the \(l_1\)-distance
between two points \(p, q \in \mathbb{R}^2\) by
\(D_1(p,q) = |x(p)-x(q)| + |y(p)-y(q)|\).
Then we have:

\begin{lem}
\label{lemma:l1:is:2:decomposable}
Let \(P\) be a finite non-empty set of points
in \(\mathbb{R}^2\). Then the metric \(D_1|_P\) is
\begin{itemize}
\item[(i)] two-decomposable.
\item[(ii)] the sum of two treelike metrics.
\end{itemize}
\end{lem}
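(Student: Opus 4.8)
The plan is to exploit the fact that the $l_1$-distance separates additively into its two coordinate contributions. Writing $D_x(p,q) = |x(p)-x(q)|$ and $D_y(p,q) = |y(p)-y(q)|$ for $p,q \in P$, we have $D_1|_P = D_x + D_y$, where each summand is a metric on $P$ obtained by projecting the points onto one coordinate axis. Each of $D_x$ and $D_y$ is therefore a \emph{one-dimensional} metric, i.e., the distance function of a set of points placed on a line.

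For part (ii) this is essentially all that is needed. Placing the points of $P$ on a line at their $x$-coordinates (merging points that share an $x$-coordinate) and joining consecutive positions by edges weighted by the gaps between the coordinates yields a path, and hence a tree, that realizes $D_x$; moreover this path is an optimal realization, since any realization must already contain a path of length $\max_p x(p) - \min_p x(p)$ between the extreme points. The same construction for the $y$-coordinates realizes $D_y$. Thus both $D_x$ and $D_y$ are treelike and $D_1|_P = D_x + D_y$ is a sum of two treelike metrics.

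For part (i) I would make the split structure underlying these path realizations explicit. Sorting the distinct $x$-coordinates occurring in $P$ as $a_1 < \dots < a_k$, for each $i < k$ define the split $S_i = A_i | B_i$ with $A_i = \{p \in P : x(p) \le a_i\}$ and $B_i = \{p \in P : x(p) > a_i\}$, weighted by $a_{i+1}-a_i$; a direct check gives $D_x = \sum_i (a_{i+1}-a_i)\,D_{S_i}$, and since $A_1 \subseteq A_2 \subseteq \dots$ one has $A_i \cap B_j = \emptyset$ whenever $i \le j$, so these $x$-splits are pairwise compatible. Carrying out the same construction for the $y$-coordinates produces a family of pairwise compatible $y$-splits whose weighted sum is $D_y$. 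Taking $\Sigma$ to be the union of these two families (combining the weights should a split happen to arise from both coordinates) then gives a weighted split system inducing $D_1|_P = D_x + D_y$.

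The step I expect to be the crux is verifying that $\Sigma$ is two-compatible. Here the key observation is a pigeonhole argument: splits coming from the same coordinate are pairwise compatible, so among any three distinct splits of $\Sigma$ at least two must arise from the same coordinate (assigning any split that arises from both to one of them), and these two are then compatible. Hence no three-element subset of $\Sigma$ consists of pairwise incompatible splits, which is precisely the two-compatibility condition. This exhibits $D_1|_P$ as a metric induced by a two-compatible split system and completes part (i).
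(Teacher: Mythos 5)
Your proposal is correct and follows essentially the same route as the paper: the coordinate decomposition $D_1|_P = D_x + D_y$, the two families of axis-parallel threshold splits with gap weights (merging weights when a split arises from both coordinates), and the pigeonhole argument that the union of two pairwise-compatible families is two-compatible are all exactly the paper's construction. The only cosmetic difference is in part (ii), where you verify treelikeness by exhibiting an optimal path realization directly, while the paper instead invokes Buneman's characterization of treelike metrics as those induced by pairwise compatible weighted split systems.
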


\begin{proof}
(i) Let \(\Sigma_v\) be the set of those splits \(A|B\) of \(P\)
for which there exists a real number \(r\) such that 
\(A=\{p \in P : x(p) < r\}\) and
\(B=\{p \in P : x(p) > r\}\).
Similarly, let \(\Sigma_h\) be the set of those splits \(A|B\) of \(P\)
for which there exits a real number \(r\) such that 
\(A=\{p \in P : y(p) < r\}\) and
\(B=\{p \in P : y(p) > r\}\).
For every \(S \in \Sigma_v\), put 
\(\alpha(S) = \min\{x(b) - x(a) : a \in A, b \in B\}\)
and, for every \(S \in \Sigma_h\) put
\(\beta(S) = \min\{y(b) - y(a) : a \in A, b \in B\}\).
Note that any two splits in \(\Sigma_v\) as well as
any two splits in \(\Sigma_h\) are compatible.
Hence, the split system \(\Sigma := \Sigma_v \cup \Sigma_h\)
is two-compatible.

Now, define, for any split \(S\) in \(\Sigma\), the
weight
\[
\lambda(S) =
\begin{cases}
\alpha(S), &\text{if} \ S \in \Sigma_v \setminus \Sigma_h,\\
\beta(S), &\text{if} \ S \in \Sigma_h \setminus \Sigma_v,\\
\alpha(S) + \beta(S), &\text{if} \ S \in \Sigma_h \cap \Sigma_v.
\end{cases}
\]
It is not hard to check that 
\(D_1|_P = \sum_{S \in \Sigma} \lambda(S) \cdot D_S\),
implying that \(D_1|_P\) is indeed two-decomposable.

(ii) Continuing to use the notation introduced in the proof of (i),
note that we have \(D_1|_P = D_v + D_h\) with
\(D_v = \sum_{S \in \Sigma_v} \alpha(S) \cdot D_S\) and
\(D_h = \sum_{S \in \Sigma_h} \beta(S) \cdot D_S\).
Therefore, it remains to note that
\(D_v\) and \(D_h\) are treelike in view of
the fact that a metric space \((D',X')\) is treelike 
if there exists a system \(\Sigma'\) of pairwise
compatible splits of \(X'\)
and a map \(\lambda':\Sigma' \rightarrow \mathbb{R}_{>0}\)
with \(D' = \sum_{S \in \Sigma} \lambda'(S) \cdot D_S\)
\cite{bun-71}.
\end{proof}

%%%%%%%%%%%%%%%%%%%%%%%%%%%%%%%%%%%%%%%%%%%%%%%%%%%%%%%%%%%%%%%%%%
\section{Computing a realization in the tight span}
\label{section:algorithm:tight:span}
%%%%%%%%%%%%%%%%%%%%%%%%%%%%%%%%%%%%%%%%%%%%%%%%%%%%%%%%%%%%%%%%%%

We now present our algorithm
for computing realizations using the tight span.
We also prove that it is guaranteed to yield an optimal
solution for some special types of metrics.
Given a finite metric space \((X,D)\), 
the basic idea of our algorithm is to select,
for each pair $\{x,y\}$ of distinct elements in $X$,
a shortest path from $k_x$ to $k_y$ in \(G_D\).
The union of these paths is then a realization of 
$(X,D)$. This is summarized in
the form of pseudo-code in Algorithm~\ref{alg:basic}.

\begin{algorithm}[h]
  \KwIn{A finite metric space $(X,D)$}
  \KwOut{A realization of $(X,D)$}
  Initialize the graph \(G = (V,E)\) with $V=\{k_x: x \in X\}$, $E=\emptyset$\;
  Form a list $L$ of all pairs $\{x,y\} \in \binom X2$\;
  \ForEach{$\{x,y\}\in L$}{
     \FuncSty{find\_path}($D,k_x,y,G$)\;
     \tcc{Adds, if necessary, edges of $G_D$ to \(G\) 
          so that, after the call, \(G\) contains a path of length 
          $D(x,y)$ from $k_x$ to $k_y$.}}
  \KwRet{$(G=(V,E),\omega_{D}|_E,\tau_D)$}\;
  \caption{The basic algorithm.}
  \label{alg:basic}
\end{algorithm}
\begin{algorithm}[h!]
  \SetKwInOut{Input}{Function}
  \Input{\FuncSty{find\_path}($D,u,x,G$)}
  Initialize \(v = u\)\;
  \If{$u$ {\rm is a vertex of} \(G\)}
     {Let \(M\) be the set of those vertices \(w\) of \(G\) for which\\
      there is a path of length \(D_{\infty}(u,w)\) from \(u\) to \(w\) in \(G\)\\       and \(D_{\infty}(u,x) = D_{\infty}(u,w) + D_{\infty}(w,x)\)\; 
      Let \(v\) be a vertex in \(M\) with \(D_{\infty}(v,x)\) minimum\;}
  \Else{Add \(u\) to \(G\)\;}
  \If{$v$ {\rm equals} $k_x$} {\KwRet{}\;}
  Make a simplex step from $v$ to arrive at vertex \(w\)\;
  Add the edge $\{v,w\}$ to $G$\;
  \FuncSty{find\_path}($D,w,x,G$)\;
  \caption{Compute a path using the existing partial realization.}
  \label{alg:findpath}
\end{algorithm}

Pseudocode for the function \texttt{find\_path}
is presented in Algorithm~\ref{alg:findpath}.
This function essentially computes, 
for any vertex \(u\) of \(G_D\) and any
\(x \in X\), a shortest path from \(u\) to \(k_x\) in \(G_D\).
To avoid computing the whole graph \(G_D\),
it constructs such a path
edge by edge employing the polyhedron \(P_D\) as follows.
It computes in polynomial time
from the description of \(P_D\) all vertices \(v\) of
\(G_D\) that are adjacent to \(u\) in \(G_D\). 
Among these vertices, one with
\(D_{\infty}(u,k_x) = D_{\infty}(u,v) + D_{\infty}(v,k_x)\)
that minimizes \(D_{\infty}(v,k_x)\) is selected. We refer to
this as a \emph{simplex step} from \(u\) that
\emph{arrives} at vertex \(v\), since this is
similar to one step in Dantzig's well-known 
simplex algorithm \cite{dan-63}.

To make use of the fact that certain edges
of \(G_D\) might have been added to \(G\)
in previous rounds of the \textbf{foreach}-loop
in Algorithm~\ref{alg:basic}, the function
\texttt{find\_path} first explores whether the
current graph \(G\) already contains edges
that can serve as the initial part of a suitable
path from \(u\) to \(k_x\). One would expect that
the choice of the order in which pairs are processed
in the \textbf{foreach}-loop has some impact
on how many edges can be re-used in subsequent
rounds. We found that ordering the pairs according
to increasing distances between them tends to work 
well in practice.
Then, in particular, for any elements \(x,y,z \in X\) with
\(D(x,y) + D(y,z) = D(x,z)\), no edges will be
added when processing the pair \(\{x,z\}\).

Note that our algorithm is guaranteed to output an optimal realization for
any treelike metric and any metric that corresponds to
the shortest path distances between the pairs of vertices
of a graph that is a cycle. The former follows from the
fact that, for any treelike metric, \(G_D\) \emph{is} a tree \cite{dre-84}, 
and the latter is an immediate
consequence of the fact that we process the pairs of
elements in \(X\) according to increasing distances between them.
Moreover, using the decomposition of
a given metric according to
\cite{her-var-07a,her-var-08a} as a preprocessing step,
it follows that an optimal realization can be 
obtained for a given metric \(D\)
if the decomposition of \(D\) yields only subinstances
for which our algorithm outputs an optimal realization.
In particular, it follows that our algorithm
produces optimal realizations for all inputs
given in the appendix of \cite{var-06a}.

%%%%%%%%%%%%%%%%%%%%%%%%%%%%%%%%%%%%%%%%%%%%%%%%%%%%%%%%%%%%%%%%%%%%%%
\section{Minimum Manhattan networks and optimal realizations}
\label{section:mmn:or:connection}
%%%%%%%%%%%%%%%%%%%%%%%%%%%%%%%%%%%%%%%%%%%%%%%%%%%%%%%%%%%%%%%%%%%%%%

In this section, using properties of the
tight span, we give a concise proof of
the fact that the problem of computing a minimum  
Manhattan network is nothing other than the problem of
computing an optimal realization for a special
class of finite metric spaces (see also \cite{epp-11a} 
for related work). This allows us to directly compare
our heuristic for computing realizations 
with some existing algorithms for computing minimum
Manhattan networks. Note that this fact seems
to have not been pointed out before in the literature
and has some interesting consequences for
the computational complexity of constructing an optimal
realization which we shall also point out.

To state the main result of this section,
we first introduce some more notation.
A \emph{Manhattan network} \((G=(V,E),\omega)\) 
consists of a finite graph \(G\) 
whose vertex set \(V \subseteq \mathbb{R}^2\) is a
set of points in the plane and a map \(\omega\) that
assigns to each edge \(\{p,q\} \in E\) as its length the 
\(l_1\)-distance \(D_1(p,q)\) between the points \(p\) and \(q\).
In addition, we require that, for every edge \(\{p,q\} \in E\),
the straight line segment \(\overline{p,q}\) with endpoints \(p\)
and \(q\) is either horizontal or vertical and, for any two distinct
edges \(e_1 = \{p_1,q_1\}\) and \(e_2 = \{p_2,q_2\}\) in \(E\),
the straight line segments \(\overline{p_1,q_1}\) and \(\overline{p_2,q_2}\)
do not \emph{cross}, that is, 
\(\overline{p_1,q_1} \cap \overline{p_2,q_2} \subseteq e_1 \cap e_2\). 
For any path \({\bf p}\) from \(p\) to \(q\) 
in \(G\), \(\ell({\bf p})\) denotes
the length of \({\bf p}\), and \({\bf p}\) is \emph{monotone}
if \(D_1(p,q) = \ell({\bf p})\).

Now, given a finite set of points \(P \subseteq \mathbb{R}^2\), 
a \emph{Manhattan network for} \(P\) is 
a Manhattan network \((G=(V,E),\omega)\) with \(P \subseteq V\)
such that for any two distinct \(p,q \in P\) there exists a
monotone path from \(p\) to \(q\) in~\(G\).
Such a network is called \emph{minimum} if its total length
is minimum among all Manhattan networks for~\(P\)
(cf. Figure~\ref{figure:example:mmn}).
The \emph{minimum Manhattan network} problem 
has been studied by several researchers over the
last few years (for a comprehensive list of references for this
problem see e.g. \cite{kna-spi-11a}). We have the following
relationship between minimum Manhattan networks and 
optimal realizations:

\begin{figure}
\centering
\includegraphics[scale=1.0]{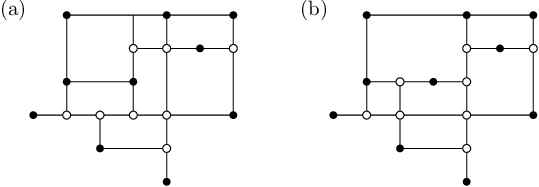}
\caption{(a) A Manhattan network for the set \(P\) of 
             points drawn as black dots. Other vertices of the
             network are drawn as empty circles.
             The network is not minimum.
         (b) A minimum Manhattan network for \(P\).}
\label{figure:example:mmn}
\end{figure}

\begin{thm}
\label{theorem:mmn:is:optimal:realization}
Let \(P\) be a finite non-empty set of points
in \(\mathbb{R}^2\). Then, for any minimum Manhattan
network \((G=(V,E),\omega)\) for \(P\),
\((G=(V,E),\omega,id_P)\) is an optimal realization
of \((P,D_1|_P)\), where \(id_P\) is the
identity map on \(P\).
\end{thm}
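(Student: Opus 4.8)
The plan is to establish the two required properties separately: first that any minimum Manhattan network for $P$ is indeed a realization of $(P, D_1|_P)$, and second that among all realizations it has minimum total length. The first part is essentially built into the definition of a Manhattan network for $P$: since the graph $G$ has $P \subseteq V$, the identity map $id_P$ serves as the labeling, the weight $\omega$ assigns $l_1$-lengths to edges, and the defining property guarantees a monotone path of length $D_1(p,q)$ between any two $p,q \in P$. It remains to verify that $D_1(p,q)$ is genuinely the \emph{shortest}-path distance in $G$, not merely \emph{a} path length; but no path in a Manhattan network can be shorter than $D_1(p,q)$, since each edge is horizontal or vertical and the $l_1$-length of any path is at least the $l_1$-distance between its endpoints by the triangle inequality. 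Hence $(G, \omega, id_P)$ is a realization.

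The heart of the argument is the optimality, and here I would exploit the tight-span machinery developed in Section~\ref{section:tight:span:or}. By Lemma~\ref{lemma:l1:is:2:decomposable}(i) the metric $D_1|_P$ is two-decomposable, so Theorem~\ref{theorem:vertex:embedding:optimal:realization:into:tight:span} applies: there exists an optimal realization that is a sub-realization of $(G_D, \omega_D, \tau_D)$, where $D = D_1|_P$. The strategy is to show that every realization sitting inside the tight span of an $l_1$-metric can be realized \emph{geometrically} as a Manhattan network in the plane, and conversely, so that the two optimization problems have the same optimal value. Concretely, I would argue that the tight span $T_D$ of a planar $l_1$-metric embeds isometrically into $(\mathbb{R}^2, D_1)$, which lets me transport any sub-realization of $G_D$ to a Manhattan network for $P$ of equal total length, and transport any Manhattan network back to a realization in $T_D$.

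The key technical step, which I expect to be the main obstacle, is controlling the geometry of this correspondence: I must verify that under the embedding of $T_D$ into $(\mathbb{R}^2, D_1)$ the edges map to axis-parallel segments, that the non-crossing condition in the definition of a Manhattan network is respected, and above all that \emph{lengths are preserved}, i.e.\ $D_\infty(h(u), h(v)) = D_1(\text{image of }u, \text{image of }v)$ for the relevant vertices. The delicate direction is that an arbitrary Manhattan network need not come from the 1-skeleton $G_D$ on the nose --- recall the remark in Section~\ref{section:tight:span:or}, due to Althöfer, that optimal realizations need not be isomorphic to sub-realizations of $G_D$ --- so I would phrase the comparison purely at the level of \emph{total length}: every Manhattan network for $P$ yields a realization of $(P,D_1|_P)$ of the same length (using the first paragraph plus the observation that a monotone-path network is automatically a realization), and every realization of $(P,D_1|_P)$ of length $\ell$ can be transformed, via its map into $T_D$ guaranteed by Dress's result in Section~\ref{section:tight:span:or}, into a Manhattan network for $P$ of length at most $\ell$.

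Putting these inequalities together, the minimum total length over Manhattan networks for $P$ equals the minimum total length over realizations of $(P, D_1|_P)$. Since the given network $(G,E)$ attains the former minimum, the realization $(G, \omega, id_P)$ attains the latter, which is precisely the statement that it is an optimal realization. I would be careful to confirm that the monotone-path requirement in the Manhattan-network definition matches exactly the shortest-path requirement in the realization definition for the $l_1$-metric, as this equivalence is what makes the two minimization problems identical rather than merely comparable.
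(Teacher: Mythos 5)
Your strategy coincides with the paper's: the direction ``Manhattan network \(\Rightarrow\) realization'' is immediate, and for optimality you invoke Lemma~\ref{lemma:l1:is:2:decomposable}(i) together with Theorem~\ref{theorem:vertex:embedding:optimal:realization:into:tight:span} to place an optimal realization inside \(T_{D_1|_P}\), transport it to the plane, and compare total lengths. However, the two steps you flag as ``to be argued'' are exactly where the content of the proof lies, and as stated one of them would fail. First, the isometric embedding of \((T_{D_1|_P},D_{\infty})\) into \((\mathbb{R}^2,D_1)\) cannot be left as an assertion: the paper obtains it from the fact that \((\mathbb{R}^2,D_1)\) is an \emph{injective} metric space combined with the universal property of tight spans (Lemma~\ref{lemma:induced:isometric:embedding}), which was set up in Section~\ref{section:preliminaries} precisely for this purpose; moreover the embedding extends the Kuratowski embedding, i.e.\ sends each \(k_p\) to \(p\), which is what makes the labeling by \(id_P\) come out right. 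Without naming this mechanism, the linchpin of your plan is missing.

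Second, your ``key technical step'' --- verifying that under the embedding the edges map to axis-parallel segments and that the non-crossing condition is respected --- is a verification that would fail: an isometric embedding only turns an edge \(\{u,v\}\) into a pair of points \(g(u),g(v)\) at \(l_1\)-distance \(\omega(\{u,v\})\); the straight segment joining them is in general diagonal. The correct move (and the paper's) is not to demand axis-parallel images but to \emph{replace} each edge by a monotone rectilinear (staircase) path from \(g(u)\) to \(g(v)\), which has the same \(l_1\)-length; crossings are handled by subdividing at crossing points and overlaps can only decrease total length. One must then still check that the resulting network \(\mathcal{N}\) is a Manhattan network \emph{for \(P\)}: since shortest paths in \(\mathcal{N}\) between images of vertices are no longer than shortest paths in \(G\), any two \(p,q\in P\) are joined in \(\mathcal{N}\) by a path of length exactly \(D_1(p,q)\), i.e.\ a monotone path. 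A further small slip: you appeal to Dress's result to map \emph{every} realization into \(T_D\), but as recalled in Section~\ref{section:tight:span:or} that result concerns optimal realizations only --- which is fortunately all your length comparison needs. With these repairs your argument goes through and is the paper's proof.
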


\begin{proof}
By definition, any Manhattan network for \(P\)
is, up to adding the map \(id_P\), 
a realization of \((P,D_1|_P)\). Hence, it
suffices to show that there exists a Manhattan
network for \(P\) whose total length
is at most the total length of some optimal
realization of \((P,D_1|_P)\).

Consider an optimal realization \((G=(V,E),\omega,\tau)\) 
of \((P,D_1|_P)\)
such that there exists an injective map \(h:V \rightarrow T_{D_1|_P}\)
with \(w(\{u,v\}) = D_{\infty}(h(u),h(v))\) for all edges
\(\{u,v\} \in E\) and \(h(\tau(p)) = k_p\) for all \(p \in P\). 
By Lemma~\ref{lemma:l1:is:2:decomposable} and 
Theorem~\ref{theorem:vertex:embedding:optimal:realization:into:tight:span},
such an optimal realization always exists.

Now, since the metric space \((\RR^2,D_1)\) is injective
(see e.g. \cite{cat-che-11a}), it follows that 
for every finite set \(P\) of points in \(\mathbb{R}^2\)
there exists an isometric embedding of \((T_{D_{1}|_P},D_{\infty})\)
into \((\mathbb{R}^2,D_1)\) that maps every \(k_p\), \(p \in P\), to~\(p\).
Therefore, there exists an injective map \(g:V \rightarrow \mathbb{R}^2\)
with \(w(\{u,v\}) = D_1(g(u),g(v))\) for all edges
\(\{u,v\} \in E\) and \(g(\tau(p)) = p\) for all \(p \in P\).
To obtain a Manhattan network for \(P\),
start with the points in \(g(V)\) and then add, step by step,  
for every \(\{u,v\} \in E\), edges to obtain
a monotone path from \(g(u)\) to \(g(v)\) (if an edge \(e\)
on this monotone path crosses an edge \(e'\) added in some
previous step, we place an additional vertex at the point
where \(e\) and \(e'\) cross to remove the crossing). Note that
in the resulting Manhattan network \(\mathcal{N}\) 
the length of a shortest path
between \(g(u)\) and \(g(v)\) can be at most the
length of a shortest path between \(u\) and \(v\) in \(G\)
for all \(u,v \in V\). This implies that there is
a monotone path from \(p\) to \(q\) in \(\mathcal{N}\)
for all \(p,q \in P\). Hence, \(\mathcal{N}\) is
indeed a Manhattan network for \(P\). Finally, the
total length of \(\mathcal{N}\) is, by construction,
not larger than the total length of \(G\), as required.
\end{proof}

Before concluding this section, we 
point out some interesting implications
of the last result:

\begin{cor}
\label{corollary:mmn:or}
Computing an optimal realization of a finite
metric space \((X,D)\) is NP-hard even if
\begin{itemize}
\item[(i)]
\(D\) is two-decomposable, or
\item[(ii)]
\(D\) is the sum of two treelike metrics on \(X\).
\end{itemize}
\end{cor}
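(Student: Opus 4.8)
The plan is to obtain both statements simultaneously by a single polynomial-time reduction from the minimum Manhattan network problem, which is known to be NP-hard \citep{chi-guo-09a}. The key observation is that all the ingredients are already in place: the NP-hardness of computing a minimum Manhattan network, Lemma~\ref{lemma:l1:is:2:decomposable}, and Theorem~\ref{theorem:mmn:is:optimal:realization}. So I expect this to be a short corollary rather than a fresh argument.

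First I would set up the reduction. Given an instance of the minimum Manhattan network problem, that is, a finite non-empty set of points $P \subseteq \mathbb{R}^2$, I would map it to the finite metric space $(P, D_1|_P)$. This metric can clearly be computed in polynomial time by evaluating all pairwise $l_1$-distances. By Lemma~\ref{lemma:l1:is:2:decomposable}, the metric $D_1|_P$ is simultaneously two-decomposable and the sum of two treelike metrics, so the very same instance witnesses both restricted cases (i) and (ii), and no separate construction is needed for the two parts.

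Next I would argue that this reduction preserves the optimal value. On the one hand, every Manhattan network for $P$ is, up to adding the map $id_P$, a realization of $(P, D_1|_P)$, so the optimal realization value is at most the minimum Manhattan network length. On the other hand, Theorem~\ref{theorem:mmn:is:optimal:realization} asserts that any minimum Manhattan network for $P$ is in fact an optimal realization of $(P, D_1|_P)$; in particular the minimum Manhattan network length equals the optimal realization value. Hence the two optimal values coincide, and a polynomial-time algorithm that computed the value of an optimal realization for two-decomposable metrics (respectively, for sums of two treelike metrics) would immediately decide the minimum Manhattan network problem in polynomial time. Since the latter is NP-hard, both (i) and (ii) follow.

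The main point to get right is the direction and validity of the reduction: I would make sure that I am reducing \emph{from} the (NP-hard) minimum Manhattan network problem \emph{to} the optimal realization problem, and that Theorem~\ref{theorem:mmn:is:optimal:realization} really supplies the equality of the two optimal values — not merely one inequality — so that an oracle for optimal realization on these restricted metric classes genuinely solves the Manhattan network problem. Everything else, namely the polynomial-time computability of $D_1|_P$ and the membership of $D_1|_P$ in both restricted classes via Lemma~\ref{lemma:l1:is:2:decomposable}, is routine.
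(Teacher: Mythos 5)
Your proposal is correct and follows essentially the same route as the paper: a reduction from the minimum Manhattan network problem (NP-hard by \citet{chi-guo-09a}) to optimal realization, using Theorem~\ref{theorem:mmn:is:optimal:realization} to identify the two optimal values and Lemma~\ref{lemma:l1:is:2:decomposable} to place $D_1|_P$ in both restricted classes at once; your version merely spells out the value-preservation argument more explicitly. The only content in the paper's proof not in yours is a side remark that part (i) can alternatively be deduced from the NP-hardness reduction of \citet{alt-88a}.
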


\begin{proof}
In \cite{chi-guo-09a} it is shown that
computing (even just the total edge length of)
a minimum Manhattan network is NP-hard.
In view of Theorem~\ref{theorem:mmn:is:optimal:realization},
this implies that computing an optimal realization
of \((P,D_1|_P)\) for a given point set \(P\)
is NP-hard. By Lemma~\ref{lemma:l1:is:2:decomposable}(i) 
the metric \(D_1|_P\) is two-decomposable.
This establishes (i). Alternatively, this
also follows from the NP-hardness proof in \cite{alt-88a}: It can
be checked that the metric that arises from applying the
reduction is always two-decomposable.

In Lemma~\ref{lemma:l1:is:2:decomposable}(ii)
it was shown that \(D_1|_P\) is even the sum of 
two treelike metrics on \(P\). This establishes~(ii).
\end{proof}

%%%%%%%%%%%%%%%%%%%%%%%%%%%%%%%%%%%%%%%%%%%%%
\section{Finding minimal subrealizations}
\label{section:minimal:subrealizations}
%%%%%%%%%%%%%%%%%%%%%%%%%%%%%%%%%%%%%%%%%%%%%

In a similar spirit to finding optimal
realizations, there is a whole family
of so-called \emph{inverse shortest path} problems 
(see e.g. \cite{cui-hoc-10a} and the references therein).
In these problems, one has a collection
of allowed edit operations on graphs such as, for example,
deleting edges or, if a graph
has weights assigned to its edges, changing these weights. Each edit
operation has an associated cost. Then, given a graph \(G\)
and required distances between certain pairs
of vertices in \(G\), a minimum cost editing of \(G\)
is sought so that in the resulting graph
the shortest path distances between
the specified pairs equal the given distances. 
The problem of finding a minimal
subrealization mentioned in the introduction
can be viewed as yet another variant of this theme
and we will briefly collect some facts about it
in this section. 

First note that, in view of the fact that
the problem of computing a minimum Manhattan network
is NP-hard \cite{chi-guo-09a} 
and the fact that there is always a minimum
Manhattan network that is contained in the grid induced
by the given point set (see e.g. \cite{BenkertWWS06}),
we have:

\begin{prop}
\label{proposition:hardness:minimal:subrealizations}
The problem of computing a minimal subrealization
of a given realization \((G,\omega,\tau)\) is NP-hard
even if \(G\) is a two-dimensional grid graph.
\end{prop}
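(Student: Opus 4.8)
The plan is to give a polynomial-time reduction from the minimum Manhattan network problem, which is NP-hard by \citet{chi-guo-09a} (indeed, even computing the total length of such a network is NP-hard). Given a finite point set \(P \subseteq \mathbb{R}^2\), I would first build the grid graph \(G = (V,E)\) induced by \(P\): its vertices are all intersection points of the vertical and horizontal lines passing through the points of \(P\), its edges join grid-adjacent vertices, and \(\omega\) assigns to each edge \(\{p,q\}\) the distance \(D_1(p,q)\). Taking \(\tau = id_P\) (every point of \(P\) is among these grid vertices), the triple \((G,\omega,id_P)\) is a realization of \((P,D_1|_P)\), since the grid contains a monotone path between any two of its vertices, and in particular between any two points of \(P\). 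This construction is clearly polynomial in \(|P|\), and \(G\) is a two-dimensional grid graph, so hardness of the subrealization problem on this restricted class of inputs would establish the proposition.

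The heart of the argument is to identify minimal subrealizations of \((G,\omega,id_P)\) with minimum Manhattan networks for \(P\) that lie in the grid. A subrealization is obtained by deleting edges (and any resulting isolated vertices) while keeping the shortest-path distance between \(p\) and \(q\) equal to \(D_1(p,q)\) for all \(p,q \in P\); this is precisely the defining property of a Manhattan network for \(P\) that is a subgraph of \(G\), namely the existence of a monotone \(p\)--\(q\) path. Conversely, every Manhattan network for \(P\) contained in the grid is exactly such a subrealization. Since the edge weights agree in both settings (the same \(l_1\)-lengths), minimizing total edge length over subrealizations coincides with minimizing total length over grid-contained Manhattan networks for \(P\).

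To close the gap between ``minimum among grid-contained networks'' and ``minimum among all Manhattan networks'', I would invoke the fact \citep{BenkertWWS06} that there always exists a minimum Manhattan network contained in the grid induced by \(P\). Hence the total length of a minimal subrealization of \((G,\omega,id_P)\) equals the total length of a minimum Manhattan network for \(P\), and any algorithm computing the former would solve the latter in polynomial time, contradicting NP-hardness unless P \(=\) NP. The main obstacle I anticipate is the careful verification of the equivalence in the second step: one must check that the ``realization'' condition (all pairwise shortest-path distances equal \(D_1\)) matches exactly the ``Manhattan network'' condition (a monotone path between every pair of points of \(P\)) when restricted to subgraphs of the grid, so that the two optimization problems share the same feasible sets and objective. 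The remaining ingredients are then immediate from the cited results.
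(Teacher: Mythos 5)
Your proposal is correct and follows essentially the same route as the paper, which derives the proposition in a single sentence from exactly the two facts you cite: the NP-hardness of the minimum Manhattan network problem \citep{chi-guo-09a} and the existence of a minimum Manhattan network contained in the grid induced by the point set \citep{BenkertWWS06}. Your more detailed verification that subrealizations of the weighted grid graph coincide with grid-contained Manhattan networks (via the equivalence of monotone paths and shortest paths realizing \(D_1\)) is precisely the step the paper leaves implicit.
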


Next note that, following a similar approach
to the one used in \cite{BenkertWWS06} for computing 
a minimum Manhattan network, one can phrase
the problem of computing a minimal subrealization
as a MIP. For the convenience of the reader, we include below
the description of the MIP that we used for benchmarking
in the computational experiments and that yields, 
for any given realization \((G=(V,E),\omega,\tau)\) 
of a finite metric space \((X,D)\), a 
subgraph \(G'=(V',E')\) of \(G\) with minimum total edge length
such that \((G',\omega|_{E'},\tau)\)
is also a realization of \((X,D)\): 

\begin{itemize}
\item
For every edge \(\{u,v\} \in E\), we introduce 
two directed edges \((u,v)\) from \(u\) to \(v\) and
\((v,u)\) from \(v\) to \(u\). Let \(\overline{E}\) denote
the set of these directed edges.
\item
For every edge \(\{u,v\} \in E\), we have a binary variable
\(x_{\{u,v\}}\) indicating whether or not \(\{u,v\}\)
is an edge of \(G'\).
\item
For any two distinct elements \(x,y \in X\),
we send one unit of flow from \(\tau(x)\)
to \(\tau(y)\) that ensures that there is at least
one path from \(x\) to \(y\) 
of length \(D(x,y)\) in \(G'\). To describe this flow, 
we introduce, for every directed edge \((u,v) \in \overline{E}\),
a real-valued variable \(f_{(u,v)\{x,y\}}\). 
\item
For any two distinct elements \(x,y \in X\),
the variables must satisfy the following constraints:
\begin{itemize}
\item[(1)]
\(x_{\{u,v\}} \geq f_{(u,v)\{x,y\}} \geq 0\) and \(x_{\{u,v\}} \geq f_{(v,u)\{x,y\}} \geq 0\)
for all \(\{u,v\} \in E\).
\item[(2)]
\(\sum_{u,\{u,v\} \in E} (f_{(u,v)\{x,y\}} - f_{(v,u)\{x,y\}}) = 0\)
for all \(v \in  V \setminus \{\tau(x),\tau(y)\}\).
\item[(3)]
\(\sum_{u,\{u,v\} \in E} (f_{(u,v)\{x,y\}} - f_{(v,u)\{x,y\}}) = -1\)
for \(v = \tau(x)\).
\item[(4)]
\(\sum_{u,\{u,v\} \in E} (f_{(u,v)\{x,y\}} - f_{(v,u)\{x,y\}}) = 1\)
for \(v = \tau(y)\).
\item[(5)]
\(\sum_{\{u,v\} \in E} w(\{u,v\}) \cdot (f_{(u,v)\{x,y\}} + f_{(v,u)\{x,y\}}) \leq D_G(\tau(x),\tau(y))\).
\end{itemize}
\item
The objective function is
\[
\sum_{\{u,v\} \in E} w(\{u,v\}) \cdot x_{\{u,v\}} \rightarrow \min.
\]
\end{itemize}

In practice, we found that the size of 
the MIP can often be reduced considerably 
by only introducing the variable \(f_{(u,v)\{x,y\}}\) 
for those edges \(\{u,v\} \in E\) that actually 
lie on some shortest path from \(\tau(x)\) 
to \(\tau(y)\) in \(G\).

%%%%%%%%%%%%%%%%%%%%%%%%%%%%%%%%%%%%%%%%%%%%%
\section{Computational Experiments}
\label{section:computational:experiments}
%%%%%%%%%%%%%%%%%%%%%%%%%%%%%%%%%%%%%%%%%%%%%

To perform computational experiments, we have
implemented the algorithm described in 
Section~\ref{section:algorithm:tight:span}
in C++ as an extension to the mathematical 
software system \texttt{polymake}~\cite{polymake}.
In this implementation, we apply, as a preprocessing step, 
the decomposition of a given metric according to
\cite{her-var-07a,her-var-08a}.

The experiments are designed to 
give an impression of the range of inputs that can be attacked by our 
algorithm in terms of size and also how close the realization
produced by our algorithm is to an optimal realization. For each
size \(n\) of the ground set of the metric space,
100~randomly generated
inputs were considered and we present 
the mean run time $t$ of our algorithm (including the
preprocessing) and the mean
ratio $r_{\sg}$ between the length of the realization produced
by our algorithm and a minimal subrealization
of \((G_D,\omega_D,\tau_D)\) (if available).
The variance of these values was usually quite low and is omitted. 

\begin{table}
\begin{center}
\scriptsize
\begin{tabular}{r|rr|r|rr|r}
 $n$ & $t$ & $t_{\man}$ & $r_{\sg}$ & $t_{\TS}$ & $t_{\solve}$ & $r_{\TS}$ \\\hline
5 & 0.28 & 0.24 & 1.01 & 0.01 & 0.40 & 0.93\\%100
10 & 0.65 & 0.41 & 1.15 & 0.07 & 2.23 & 0.66\\%100
15 & 1.46 & 0.70 & 1.22 & 4.11 & 18.90& 0.55\\%100
20 & 3.07 & 1.12 & 1.27 & 254.49 & 386.28 & 0.50\\%100
25 & 7.78 & 1.47 & 1.30 & 15075.02 & 7690.96 & 0.46\\%98
30 & 11.49 & 2.04 & 1.34 &$\star$&$\star$&$\star$\\%100
35 & 21.25 & 2.84 & 1.37 &$\star$&$\star$&$\star$\\%100
40 & 37.99 & 4.03 & 1.39 &$\star$&$\star$&$\star$\\%100
45 & 64.61 & 5.68 & 1.41 &$\star$&$\star$&$\star$\\%100
50 & 105.42 & 7.89 & 1.42 &$\star$&$\star$&$\star$\\%100
55 & 167.75 & 11.27 & 1.43 &$\star$&$\star$&$\star$\\%100
60 & 256.51 & 16.66 & 1.44 &$\star$&$\star$&$\star$\\%100
65 & 379.84 & 22.79 & 1.45 &$\star$&$\star$&$\star$\\%100
70 & 555.02 & 31.90 & 1.47 &$\star$&$\star$&$\star$\\%100
75 & 791.90 & 43.60 & 1.48 &$\star$&$\star$&$\star$\\%100
80 & 1110.62 & 61.06 & 1.49 &$\star$&$\star$&$\star$\\%100
85 & 1838.51 & 116.24 & 1.50 &$\star$&$\star$&$\star$\\%100
90 & 2229.85 & 124.47 & 1.50 &$\star$&$\star$&$\star$%243
\end{tabular}
\end{center}
\caption{Results of the computational experiments for instances
         of the minimum Manhattan network problem.}
\label{table:mmn}     
\end{table}

In the tables, 
$t_{\TS}$ denotes the time to compute the whole tight span
(if the size if the tight span admitted to compute it
using \texttt{polymake}), 
$t_{\solve}$ denotes the time needed to solve the MIP 
described in Section~\ref{section:minimal:subrealizations} using 
the solver \texttt{glpksol} from the GNU linear programming kit,
and $r_{\TS}$ denotes the ratio of the length of the realization produced
by our algorithm to the total edge length of the whole 
1-skeleton of the tight span. A \(\star\)
indicates that the corresponding value could not be obtained
because the 1-skeleton of the tight span was too large or
at least too large to solve the resulting MIP.
All run times were taken on a 
Intel(R) Core(TM)2 Quad CPU \@ 2.66GHz 
machine running CentOs 5.6 using only one core.

%%%%%%%%%%%%%%%%%%%%%%%%%%%%%%%%%%%%%%%%%%%%%%%%%%%%%%%%%%%%%%%%%%%%%%%%
\subsection{Manhattan networks}
%%%%%%%%%%%%%%%%%%%%%%%%%%%%%%%%%%%%%%%%%%%%%%%%%%%%%%%%%%%%%%%%%%%%%%%%

Inputs were generated by choosing $n$ 
random points on an integer 
$10^6\times 10^6$ grid.
In addition to the MIP described in
Section~\ref{section:minimal:subrealizations}, 
we also used the MIP presented in~\cite{BenkertWWS06} 
to compute an optimal realization
for each input point set. The run time \(t_{man}\) for
solving this alternative MIP using \texttt{glpksol}
is also given in Table~\ref{table:mmn}. As can be seen,
the realizations we obtain are usually within 
a factor $c$ of the optimum that is slowly growing with $n$
reaching \(c \approx \frac{3}{2}\) for the
largest instances considered in our experiments. Note that there exist
several polynomial time algorithms that \emph{guarantee} to
produce a realization whose length is within
a constant factor of the optimum ---
currently, for the best known algorithms, 
the factor is~2 \cite{che-nou-08a,guo-sun-08a,nou-05a}.

%%%%%%%%%%%%%%%%%%%%%%%%%%%%%%%%%%%%%%%%%%%%%%%%%%%%%%%%%%%%%%%%%%%%%%
\subsection{Two-decomposable metrics}
%%%%%%%%%%%%%%%%%%%%%%%%%%%%%%%%%%%%%%%%%%%%%%%%%%%%%%%%%%%%%%%%%%%%%%

Recall that, in case the metric \(D\) is two-decomposable, 
we know that there exists an optimal realization
that is a subrealization of \((G_D,\omega_D,\tau_D)\) 
(see Section~\ref{section:two:decomposable}).
Hence, $r_{\sg}$ is actually the ratio between
the length of the realization produced by our algorithm
and the length of an optimal realization.
We tested two types of two-decomposable metrics
(cf. Table~\ref{table:two:decomposable}):

\begin{table}
\begin{center}
\scriptsize
\begin{tabular}{r|rrr|rr}
 $n$ & $t$ & $t_{\TS}$ & $t_{\solve}$ & $r_{\sg}$ & $r_{\TS}$ \\\hline
5 & 0.46 & 0.01 & 0.43 & 1.02 & 0.95\\%100
10 & 1.46 & 0.07 & 2.05 & 1.10 & 0.77\\%100
15 & 3.00 & 3.49 & 6.83 & 1.16 & 0.70\\%100
20 & 5.44 & 225.32 & 43.73 & 1.19 & 0.66\\%100
25 & 9.18 & 13174.89 & 314.37 & 1.22 & 0.63\\%100
30 & 12.87 & $\star$ & $\star$ & $\star$ & $\star$ \\%100
35 & 24.13 & $\star$ & $\star$ & $\star$ & $\star$ \\%200
40 & 38.62 & $\star$ & $\star$ & $\star$ & $\star$ \\%100
45 & 75.90 & $\star$ & $\star$ & $\star$ & $\star$ \\%100
50 & 114.40 & $\star$ & $\star$ & $\star$ & $\star$ \\%100
55 & 169.91 & $\star$ & $\star$ & $\star$ & $\star$ \\%100
60 & 250.89 & $\star$ & $\star$ & $\star$ & $\star$ \\%100
65 & 363.25 & $\star$ & $\star$ & $\star$ & $\star$ \\%100
70 & 506.94 & $\star$ & $\star$ & $\star$ & $\star$ \\%100
75 & 587.90 & $\star$ & $\star$ & $\star$ & $\star$ \\%100
80 & 844.98 & $\star$ & $\star$ & $\star$ & $\star$ \\%100
85 & 1090.04 & $\star$ & $\star$ & $\star$ & $\star$ \\%100
90 & 1319.21 & $\star$ & $\star$ & $\star$ & $\star$ \\%100
100 & 2143.58 & $\star$ & $\star$ & $\star$ & $\star$ %100
\end{tabular} \quad
\begin{tabular}{r|rrr|rr}
 $n$ & $t$ & $t_{\TS}$ & $t_{\solve}$ & $r_{\sg}$ & $r_{\TS}$ \\\hline
5 & 0.45 & 0.01 & 0.49 & 1.04 & 0.81\\%100
10 & 1.06 & 0.07 & 2.00 & 1.16 & 0.67\\%100
15 & 2.29 & 3.49 & 9.42 & 1.17 & 0.59\\%9
20 & 21.05 & 222.07 & 83.64 & 1.22 & 0.58\\%60
\end{tabular}
\end{center}
\caption{Results of the computational experiments for
         metrics that are the sum of two treelike metrics (left)
         and general two-decomposable metrics (right).}
\label{table:two:decomposable}
\end{table}

{\em Metrics that are the sum of two treelike metrics}: 
We choose two random 
binary trees with $n$ leaves, took the set of
these leaves as the ground set of the metric space and 
assigned uniformly distributed lengths (between 1 and $10^6$)
to the edges of the trees. Then we formed the sum of 
the two treelike metrics realized by the binary trees.

{\em Metrics resulting from 
random two-compatible split systems}: We generated 
random two-compatible split systems of size $2n$ by 
generating random splits and adding them to an initially empty
system if it remains two-compatible after adding the split. 
The metric considered in the experiment is the metric 
induced by the resulting split system 
where we again assigned uniformly distributed weights 
to the splits.

%%%%%%%%%%%%%%%%%%%%%%%%%%%%%%%%%%%%%%%%%%%%%%%%%%%%%%%%%%%%%%%%%%
\subsection{Random metrics}
%%%%%%%%%%%%%%%%%%%%%%%%%%%%%%%%%%%%%%%%%%%%%%%%%%%%%%%%%%%%%%%%%%

Finally, we generated random metrics on a ground set with $n$ elements 
by choosing each pairwise distance uniformly between 
$10^6$ and $2\cdot 10^6$.
The results are presented in
Table~\ref{table:general}. Note that in this experiment
it is not known whether \((G_D,\omega_D,\tau_D)\) contains
an optimal realization of the given metric as a subrealization. Therefore,
the value $r_{\sg}$ is only a lower bound on the ratio
between the length of the realization produced by our algorithm
and the length of an optimal realization.

\begin{table}
\begin{center}
\scriptsize
\begin{tabular}{r|rrr|rr}
 $n$ & $t$ & $t_{\TS}$ & $t_{\solve}$ & $r_{\sg}$ & $r_{\TS}$ \\\hline
5 & 0.48 & 0.01 & 0.53 & 1.04 & 0.90\\%100
6 & 0.72 & 0.01 & 1.80 & 1.06 & 0.74\\%100
7 & 0.88 & 0.02 & 3.46 & 1.10 & 0.56\\%100
8 & 1.35 & 0.04 & 10.12 & 1.15 & 0.39\\%100
9 & 1.31 & 0.12 & 758.07 & 1.20 & 0.26\\%100
10 & 1.32 & 0.35 & 33652.08 & 1.21 & 0.18\\%6
15 & 3.52 & 300.29 & $\star$ & $\star$ & 0.01\\%110s
25 & 15.97 & $\star$ & $\star$ & $\star$ & $\star$ \\%100
30 & 40.47 & $\star$ & $\star$ & $\star$ & $\star$ \\%100
35 & 84.81 & $\star$ & $\star$ & $\star$ & $\star$ \\%100
40 & 181.73 & $\star$ & $\star$ & $\star$ & $\star$ \\%100
45 & 330.98 & $\star$ & $\star$ & $\star$ & $\star$ \\%100
50 & 545.36 & $\star$ & $\star$ & $\star$ & $\star$ \\%100
55 & 749.25 & $\star$ & $\star$ & $\star$ & $\star$ \\%100
60 & 1204.18 & $\star$ & $\star$ & $\star$ & $\star$ \\%100
65 & 2081.53 & $\star$ & $\star$ & $\star$ & $\star$ %100
\end{tabular}
\end{center}
\caption{Results of the computational experiments for general
         metrics.}
\label{table:general}
\end{table}
%\begin{tabular}{r|rrr|rrrr|r}
% $n$ & mean time & mean time TS & mean time ILP & mean ratio & variance ratio & max ration & min ratio & mean TS ratio\\\hline
%10 & 1.39 & 25748.57 & 0.33 & 1.21 & 0.06  & 1.28  & 1.13 & 0.18\\
%5 & 0.46 & 0.45 & 0.00 & 1.04 & 0.03  & 1.15  & 1.00 & 0.90\\
%6 & 0.79 & 1.04 & 0.01 & 1.06 & 0.04  & 1.19  & 1.00 & 0.74\\
%7 & 1.14 & 2.65 & 0.02 & 1.11 & 0.05  & 1.28  & 1.01 & 0.57\\
%8 & 1.15 & 6.18 & 0.04 & 1.15 & 0.06  & 1.29  & 1.05 & 0.39\\
%9 & 1.28 & 198.04 & 0.11 & 1.23 & 0.20  & 2.93  & 1.08 & 0.26\\
%\end{tabular}

%%%%%%%%%%%%%%%%%%%%%%%%%%%%%%%%%%%%%%%%%%%%%%%%%%%%%%%%%%%%%%%%%%%%
\section{Discussion}
\label{section:discussion}
%%%%%%%%%%%%%%%%%%%%%%%%%%%%%%%%%%%%%%%%%%%%%%%%%%%%%%%%%%%%%%%%%%%%

Our computational experiments suggest that it might be
interesting to investigate whether our heuristic (or a suitable
variant of it) yields a constant-factor approximation
algorithm for computing an optimal realization, 
at least for certain classes of metrics such as,
for example, two-decomposable metrics.

We also see that our algorithm can produce realizations 
for metric spaces with up to 50 elements, even in the
case of general random metrics. Note also that all computations 
are done with arbitrary precision rationals/integers, 
to ensure combinatorial accuracy. Using floating 
point numbers instead (which would make sense at 
least for the general random metrics, that is, generic metrics) could 
further speed up the computations.

In future work, it could also be interesting to try and
develop an exact, exponential time algorithm for computing an optimal
realization. This would be helpful for
benchmarking heuristics but would also allow to check
Dress' conjecture for more examples. We expect that this
could at least give some interesting 
further insights into the structure of the problem.

%%%%%%%%%%%%%%%%%%%%%%%%%%%%%%%%%%%%%%%%%%%%%%%%%%%%%%%%%%%%%%%
\subsubsection*{Acknowledgments}
%%%%%%%%%%%%%%%%%%%%%%%%%%%%%%%%%%%%%%%%%%%%%%%%%%%%%%%%%%%%%%%

We would like to thank the two reviewers for their helpful comments.

\bibliographystyle{plain}
\bibliography{mmn-or}

%% Authors are advised to submit their bibtex database files. They are
%% requested to list a bibtex style file in the manuscript if they do
%% not want to use model2-names.bst.

%% References without bibTeX database:

% \begin{thebibliography}{00}

%% \bibitem must have one of the following forms:
%%   \bibitem[Jones et al.(1990)]{key}...
%%   \bibitem[Jones et al.(1990)Jones, Baker, and Williams]{key}...
%%   \bibitem[Jones et al., 1990]{key}...
%%   \bibitem[\protect\citeauthoryear{Jones, Baker, and Williams}{Jones
%%       et al.}{1990}]{key}...
%%   \bibitem[\protect\citeauthoryear{Jones et al.}{1990}]{key}...
%%   \bibitem[\protect\astroncite{Jones et al.}{1990}]{key}...
%%   \bibitem[\protect\citename{Jones et al., }1990]{key}...
%%   \harvarditem[Jones et al.]{Jones, Baker, and Williams}{1990}{key}...
%%

% \bibitem[ ()]{}

% \end{thebibliography}

\end{document}